\documentclass[11pt]{article}

\usepackage[T1]{fontenc}
\usepackage{lmodern}
\usepackage[a4paper,margin=1.02in]{geometry}
\usepackage{amsmath,amssymb,amsthm,mathtools,mathrsfs}
\usepackage{booktabs,tabularx,array,float}
\usepackage{enumitem}
\usepackage{microtype}
\usepackage{xcolor}
\usepackage[colorlinks=true,linkcolor=blue!50!black,
  citecolor=blue!50!black,urlcolor=blue!50!black]{hyperref}
\usepackage[nameinlink,capitalise]{cleveref}

\hypersetup{%
  pdftitle={Second-pole wall periods for Witten zeta functions in the classical families},
  pdfauthor={Jonas Matuzas},
  pdfsubject={Witten zeta functions, second poles, Selberg integrals, classical root systems},
  pdfkeywords={Witten zeta function, second pole, Selberg integral,
  Dotsenko--Fateev integral, root system, wall period}
}

\allowdisplaybreaks
\setlist[itemize]{leftmargin=1.5em,itemsep=2pt,topsep=4pt}
\setlist[enumerate]{leftmargin=1.8em,itemsep=2pt,topsep=4pt}

\newtheorem{theorem}{Theorem}[section]
\newtheorem{proposition}[theorem]{Proposition}
\newtheorem{lemma}[theorem]{Lemma}

\newcommand{\Q}{\mathbb Q}
\newcommand{\Z}{\mathbb Z}
\newcommand{\dd}{\,\mathrm d}
\newcommand{\Res}{\operatorname*{Res}}
\newcommand{\Per}{\mathcal P}
\newcommand{\DF}{\mathcal S}
\newcommand{\ord}{\operatorname{ord}}

\newcommand{\SecondPoleQ}{\frac{r-1}{N-1}}
\newcommand{\SecondPoleQA}{\frac{2}{r+2}}
\newcommand{\SecondPoleQBC}{\frac{1}{r+1}}
\newcommand{\SecondPoleQD}{\frac{r-1}{r(r-1)-1}}
\newcommand{\SecondPoleQBTwo}{\frac13}
\newcommand{\SecondPoleQGTwo}{\frac15}
\newcommand{\SecondPoleResidue}[3]{\frac{\zeta_{\mathrm R}\!\left(#1\right)}{#2}\left(#3\right)}
\newcommand{\SecondPoleWallMeasure}{\mathrm d u}
\newcommand{\SecondPoleFfourValue}{-0.13729466454027363757\ldots}
\newcommand{\SecondPoleDfiveValue}{-1.76220785978119615132\ldots}
\newcommand{\SecondPoleFfourOrbitRatio}{2^{10/23}}

\title{Second-pole wall periods for Witten zeta functions\\
in the classical families}
\author{Jonas Matuzas\\
\href{mailto:jonas.matuzas@gmail.com}{\nolinkurl{jonas.matuzas@gmail.com}}}
\date{}

\begin{document}
\maketitle

\begin{abstract}
Let $\Phi$ be an irreducible reduced crystallographic root system of rank $r$,
and let $N$ be the number of positive coroots.  A previous theorem
\cite[Theorem~3.3]{MatuzasSecondPole} identifies the first pole below $2/h$ as
$q_2=\SecondPoleQ$ and expresses its residue as a sum of periods attached to the
simple walls of the dominant chamber.  We evaluate that wall-period sum for
all four classical families.  The identity $q_2(N-1)=r-1$ removes the radial
variable from every wall integral.  The resulting projective integrals reduce
to mixed Dotsenko--Fateev chambers in type $A_r$, to those chambers together
with a Selberg endpoint in types $B_r$ and $C_r$, and to a single chamber
family in type $D_r$.  The chamber recurrences give explicit sine weights in
types $A$, $B$, and $C$.  In type $D$, a terminating basic-hypergeometric sum
at a root of unity reduces to a finite cyclotomic product.  We also show that
the corresponding exceptional wall restrictions are not reflection
arrangements, so this particular reduction does not extend directly to the
exceptional types.
\end{abstract}

\noindent\textbf{Generative-AI disclosure and author responsibility.}
This work was produced using OpenAI's ChatGPT 5.6 Pro.  The author directed
and audited the work throughout.  Jonas Matuzas takes full responsibility for
the mathematics and the final text.

\medskip
\noindent\textbf{2020 Mathematics Subject Classification.}
Primary 11M41; Secondary 20F55, 22E46, 32S22, 33C67.

\medskip
\noindent\textbf{Keywords.}
Witten zeta function; second pole; Selberg integral; Dotsenko--Fateev integral;
root system; wall period; cyclotomic product.

\section{Introduction}

For a complex simple Lie algebra, or equivalently its simply connected compact
form, the Witten zeta function is
\[
 \zeta_\Phi(s)=\sum_{\lambda\in P_+}(\dim V_\lambda)^{-s},
\]
where $P_+$ is the set of dominant integral weights.  Its abscissa of
convergence is $2/h$, where $h$ is the Coxeter number
\cite{LarsenLubotzky2008,HasaStasinski2019}, and the residue at that pole has a
uniform Macdonald--Mehta--Opdam evaluation \cite{MatuzasLeading2026}.

Theorem~3.3 of \cite{MatuzasSecondPole} identifies the first distinct pole below
$2/h$.  It is simple, occurs at
\[
 q_2=\frac{r-1}{N-1},
\]
and has residue
\[
 \Res_{s=q_2}\xi_\Phi(s)
 =\frac{\zeta_{\mathrm R}(q_2)}{N-1}
   \sum_{i=1}^{r}\Per_i(q_2),
\]
where the $\Per_i(q_2)$ are finite positive periods associated with the simple
walls.  The problem addressed here is to evaluate this sum uniformly in the
rank for the classical families.

\paragraph{Main argument.}
On a simple wall, exactly one positive coroot vanishes identically.  After that
factor is removed, the restricted product has degree $N-1$.  Since
$q_2(N-1)=r-1$, its inverse $q_2$-power has degree equal to minus the dimension
of the wall.  Radial coordinates therefore separate off a factor $dt/t$.  The
substantive part of the proof is then to identify the remaining projective
integral and its measure in each family.  In type $A_r$, scaling by the fused
coordinate gives a mixed Dotsenko--Fateev chamber.  In types $B_r$ and $C_r$,
the substitution $t=x^2$ gives a mixed chamber for the collision walls and a
separate Selberg integral for the coordinate wall.  In type $D_r$, the same
substitution treats the chain and fork walls, after which a finite chamber sum
remains.

The resulting formulas are stated in
\cref{thm:type-a,thm:type-bc,thm:type-d} and summarized in
Table~\ref{tab:families}.

\begin{table}[H]
\centering
\small
\begin{tabularx}{\textwidth}{@{}lcl>{\raggedright\arraybackslash}X>{\raggedright\arraybackslash}X@{}}
\toprule
family & range & $q_2$ & wall periods & field of ratios \\
\midrule
$A_r$ & $r\ge3$ & $\SecondPoleQA$
 & one endpoint gamma product times a sine weight
 & $\mathbf Q(\zeta_{r+2})^+$ \\
$B_r$ & $r\ge2$ & $\SecondPoleQBC$
 & collision sine weights plus one Selberg boundary term
 & $\mathbf Q(\zeta_{4(r+1)})^+$ for collision ratios \\
$C_r$ & $r\ge2$ & $\SecondPoleQBC$
 & collision sine weights plus one Selberg boundary term
 & $\mathbf Q(\zeta_{4(r+1)})^+$ for collision ratios \\
$D_r$ & $r\ge4$ & $\SecondPoleQD$
 & one chamber recurrence and an explicit finite product
 & $\mathbf Q(\zeta_{r(r-1)-1})^+$ \\
\bottomrule
\end{tabularx}
\caption{Wall periods in the four classical families.  In types $A$ and
$B/C$, $\theta=\pi q_2/2=-\pi\gamma$.}
\label{tab:families}
\end{table}

The special-function input is classical.  We use Selberg's integral, the
Dotsenko--Fateev mixed chambers and their recurrence, the $BC_n\to A$
squaring substitution, connection formulas for Selberg-type integrals, and
the terminating $q$-Chu--Vandermonde identities
\cite{Selberg1944,ForresterWarnaar2008,ForresterRains2012,Mimachi2013,GasperRahman2004}.
The contribution of this paper is the exact identification and normalization
of the Witten wall periods in each classical family, together with the
resulting residue formulas.  No new Selberg evaluation or chamber recurrence
is asserted.

Au computed the relevant Witten zeta functions in ranks two and three and
formulated the residue-shape conjectures \cite{Au2025}.  The $A_3$
specialization of \cref{thm:type-a} agrees with his published second residue.
The rank-two formulas below are included only as normalization checks; they do
not represent the difficulty of the higher-rank argument.

Section~\ref{sec:setup} fixes the normalization.  Section~\ref{sec:homogeneity}
removes the radial variable, and Section~\ref{sec:known-input} records the
classical integral formulas used in the proof.  Sections~\ref{sec:type-a}--\ref{sec:type-d}
contain the family evaluations.  The final sections give
low-rank checks, explain why the same reduction does not apply directly to the
exceptional arrangements, and state the remaining questions.

\section{Notation and wall periods}\label{sec:setup}

\subsection{Root data and normalization}

Write $\Psi=\Phi^\vee$, choose simple coroots
$\beta_1,\ldots,\beta_r$, and expand every positive coroot as
\[
 \beta=\sum_{i=1}^r b_i(\beta)\beta_i,
 \qquad b_i(\beta)\in\Z_{\ge0}.
\]
If $\lambda=\sum_i n_i\omega_i$ is dominant, put $m_i=n_i+1$.  Weyl's
dimension formula has the form
\[
 \dim V_\lambda=\frac{P_\Phi(m)}{K_\Phi},\qquad
 P_\Phi(m)=\prod_{\beta\in\Psi^+}
 \left(\sum_{i=1}^r b_i(\beta)m_i\right),
\]
where $K_\Phi=\prod_{j=1}^r \Gamma(e_j+1)$ and $e_j$ are the exponents of $\Phi$.
We use the normalized series
\[
 \xi_\Phi(s)=K_\Phi^{-s}\zeta_\Phi(s)
 =\sum_{m\in\mathbb N^r}P_\Phi(m)^{-s}.
\]
This normalization changes residues but not pole locations or orders.

\subsection{Simple walls}

Let $C=\{x_i>0\}$ be the open dominant cone in simple-coroot evaluation
coordinates.  The $i$th simple wall is
\[
 F_i=\{x_i=0,\ x_j>0\ (j\ne i)\}.
\]
Delete from $P_\Phi$ the unique factor that vanishes identically on $F_i$ and
denote the resulting homogeneous restricted product by $Q_i$.  Let
$\Delta_i\subset F_i$ be any positive affine section meeting each ray once.
The wall period is
\[
 \Per_i(q)=\int_{\Delta_i}Q_i(u)^{-q}\,\dd\mu_i(u),
 \qquad \dd\mu_i=\SecondPoleWallMeasure.
\]
At $q=q_2$ the value is independent of the chosen positive section.
Finiteness and the residue formula are proved in
\cite[Theorem~3.3]{MatuzasSecondPole}.

\subsection{Projectivization}

If a density on a cone of dimension $d$ is homogeneous of degree $-d$, then
in radial coordinates it has the form $dt/t$ times a density on any transverse
section.  The coefficient of $dt/t$ is the projective period.  The next
section proves that every simple-wall density has this degree at $q=q_2$.

\section{Homogeneity of the wall integrals}\label{sec:homogeneity}

The equality $q_2(N-1)=r-1$ is the common reason that the radial variable
can be removed in every family.

\begin{theorem}[Radial reduction on a simple wall]\label{thm:criticality}
Let $\Phi$ be an irreducible reduced crystallographic root system of rank
$r$, put $\Psi=\Phi^\vee$ and $N=|\Psi^+|$, and let $F_i$ be a simple wall of
the positive coroot cone.  Exactly one positive coroot, namely the defining
simple coroot, vanishes identically on the relative interior of $F_i$.
Consequently the restricted wall product has $N-1$ nonzero linear factors.
At the second-pole location
\[
 q_2=\SecondPoleQ,
\]
its inverse $q_2$-power is homogeneous of degree $-(r-1)$, equal to minus the
wall dimension.  In radial coordinates $x=ty$ the wall density factors as
$dt/t$ times a section-independent positive projective density on
$\mathbf P(F_i)\cong\mathbf P^{r-2}$.
\end{theorem}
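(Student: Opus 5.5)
The argument has three parts: a combinatorial statement about which coroots vanish on $F_i$, a degree count, and a change of variables. All three are elementary; the only delicate point is positivity and section-independence of the projective density.

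\emph{Vanishing factors.} Work in simple-coroot coordinates, so each positive coroot $\beta=\sum_j b_j(\beta)\beta_j$ corresponds to the linear form $\ell_\beta(x)=\sum_j b_j(\beta)x_j$ with $b_j(\beta)\in\Z_{\ge0}$. On the relative interior of $F_i$ we have $x_i=0$ and $x_j>0$ for $j\neq i$. Hence $\ell_\beta$ vanishes identically there if and only if $b_j(\beta)=0$ for all $j\ne i$, that is, if and only if $\beta$ is a positive multiple of $\beta_i$.

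Since $\Psi=\Phi^\vee$ is reduced, the only positive coroot proportional to $\beta_i$ is $\beta_i$ itself. Moreover, every other $\ell_\beta$ is strictly positive on the relative interior of $F_i$, because it has some $b_j(\beta)>0$ with $j\ne i$. So $Q_i=P_\Phi/\ell_{\beta_i}$, restricted to $F_i$, is a product of exactly $N-1$ linear forms, each positive on the open wall.

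\emph{Homogeneity.} $Q_i$ is homogeneous of degree $N-1$, so $Q_i^{-q_2}$ is homogeneous of degree $-q_2(N-1)=-(r-1)$. This equals $-\dim F_i$, since $F_i$ is an open cone in the $(r-1)$-dimensional space $\{x_i=0\}$. The only input here is the identity $q_2(N-1)=r-1$. Note that $N\ge 2$ once $r\ge2$, so $q_2$ is well defined; the rank-one case is degenerate and I would exclude it or treat it separately.

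\emph{Radial factorization.} Fix a positive affine section $\Delta=\{\lambda(x)=1\}\cap F_i$, where $\lambda$ is a linear form positive on $\overline{F_i}\setminus\{0\}$, for instance $\lambda=\sum_{j\ne i}x_j$. Write $x=ty$ with $t=\lambda(x)>0$ and $y\in\Delta$. The standard computation gives Lebesgue measure on $\{x_i=0\}$ as $dx=t^{r-2}\,dt\,d\sigma(y)$, where $d\sigma$ is the induced measure on $\Delta$ normalized by $\lambda$; this is checked via the Jacobian of $(t,y)\mapsto ty$. Therefore
\[
Q_i(x)^{-q_2}\,dx=t^{-(r-1)}t^{r-2}\,Q_i(y)^{-q_2}\,dt\,d\sigma(y)=\frac{dt}{t}\,Q_i(y)^{-q_2}\,d\sigma(y).
\]

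The transverse density is positive because $Q_i>0$ on the open wall. For section-independence, I would note that the homogeneity of degree $-(r-1)$ is exactly the condition under which the $(r-2)$-form $Q_i^{-q_2}\,\iota_E(dx)$ is invariant under positive rescaling. Here $E$ is the Euler field and $\iota_E$ denotes contraction. This form is basic for the $\mathbb R_{>0}$-action: contracting it with $E$ gives $0$, and its Lie derivative along $E$ vanishes because the total degree is $0$. It therefore descends to a well-defined density on $\mathbf P(F_i)\cong\mathbf P^{r-2}$, and its restriction to any positive section agrees with $Q_i(y)^{-q_2}\,d\sigma$.

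This last identification, that the residue-formula measure $du$ on an arbitrary section equals $\iota_E(dx)$ restricted to that section, is the step needing the most care; I would verify it by a direct Jacobian computation for affine sections. Finiteness of the resulting period is not claimed here and is taken from \cite[Theorem~3.3]{MatuzasSecondPole}.
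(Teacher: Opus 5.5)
Your proposal is correct and follows essentially the same route as the paper: nonnegativity of the simple-coroot coefficients together with reducedness isolates $\beta_i$, the degree count $q_2(N-1)=r-1$ follows, and the Jacobian $t^{r-2}\,dt$ produces the radial factor $dt/t$. Your Euler-field argument makes explicit what the paper compresses into the remark that changing the positive section changes only the radial coordinate: $Q_i^{-q_2}\,\iota_E(dx)$ is basic, and the section measure must be read as $\iota_E(dx)|_{\Delta}$.
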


\begin{proof}
A positive coroot has a nonnegative simple expansion
$\beta=\sum_j b_j\beta_j$.  On the relative interior of $F_i$, the coordinates
$x_j$ for $j\ne i$ are positive.  The corresponding linear form therefore
vanishes identically only if $b_j=0$ for all $j\ne i$.  Since the root system is reduced, the only possibility is the simple
coroot $\beta_i$.  Deleting it leaves $N-1$ linear factors, so
$\deg Q_i=N-1$.

Write $x=ty$ with $t>0$ and $y$ in a fixed positive section.  The wall has
dimension $r-1$, hence its affine measure contributes
$t^{r-2}\dd t\,\dd\mu(y)$.  Homogeneity gives
\[
 Q_i(ty)^{-q_2}=t^{-q_2(N-1)}Q_i(y)^{-q_2}.
\]
Using $q_2=\SecondPoleQ$, the exponent of $t$ is
$(r-2)-(r-1)=-1$.  Thus the radial factor is $\dd t/t$.  Changing the positive
section changes only the radial coordinate, so the coefficient is a
well-defined projective integral.
\end{proof}

This argument proves only the radial reduction.  Convergence follows from
\cite[Lemma~3.2]{MatuzasSecondPole}; the remaining projective integral is
identified separately in each classical family.

\section{Classical integral formulas}\label{sec:known-input}

This section records the classical formulas used in the family calculations.
The required convergence and nonvanishing conditions are checked at each
specialization.

\subsection{Selberg's integral}

For $n\ge1$ and parameters satisfying
\[
 \Re\alpha>0,\qquad \Re\beta>0,\qquad
 \Re\gamma>-
 \min\left\{\frac1n,\frac{\Re\alpha}{n-1},
                    \frac{\Re\beta}{n-1}\right\},
\]
with the terms containing $n-1$ omitted when $n=1$, set
\[
 S_n(\alpha,\beta,\gamma)=
 \int_{[0,1]^n}\prod_{i=1}^n
 t_i^{\alpha-1}(1-t_i)^{\beta-1}
 \prod_{i<j}|t_i-t_j|^{2\gamma}\,\dd t.
\]
Selberg's evaluation is
\[
 S_n(\alpha,\beta,\gamma)=
 \prod_{j=0}^{n-1}
 \frac{\Gamma(\alpha+j\gamma)\Gamma(\beta+j\gamma)
       \Gamma(1+(j+1)\gamma)}
      {\Gamma(\alpha+\beta+(n+j-1)\gamma)\Gamma(1+\gamma)}.
\]
These are equations (1.1)--(1.2) in the published Forrester--Warnaar survey
\cite{ForresterWarnaar2008}.  Every specialized parameter tuple below
satisfies these inequalities; for type $D$ we also record positivity of all
gamma arguments in the endpoint product.

\subsection{Mixed Dotsenko--Fateev chambers}

For $0\le p\le n$, put
$D_{n,p}=(0,1)^p\times(1,\infty)^{n-p}$ and define the unordered mixed
chamber
\[
 \DF_{n,p}(\alpha,\beta,\gamma)=
 \int_{D_{n,p}}
 \prod_i t_i^{\alpha-1}|1-t_i|^{\beta-1}
 \prod_{i<j}|t_i-t_j|^{2\gamma}\,\dd t.
\]
Whenever the adjacent chambers converge absolutely and the displayed sine
denominators are nonzero, the factorial-normalized chambers
\[
 K_{n,p}=\frac{\DF_{n,p}}{p!(n-p)!}
\]
satisfy the Dotsenko--Fateev recurrence
\[
 \frac{K_{n,p}}{K_{n,p-1}}=
 \frac{\sin\!\bigl(\pi(n-p+1)\gamma\bigr)
       \sin\!\bigl(\pi(\alpha+\beta+(n+p-2)\gamma)\bigr)}
      {\sin(\pi p\gamma)
       \sin\!\bigl(\pi(\alpha+(p-1)\gamma)\bigr)}.
\]
The chamber definition, inversion symmetry, and recurrence are published as
(2.4)--(2.6) in \cite{ForresterWarnaar2008}; they are numbered
(2.31)--(2.33) in arXiv:0710.3981v1.  The manuscript uses the published
numbering throughout.

\subsection{Squaring substitutions and connection formulas}

The substitution $t=x^2$ that folds a $BC_n$ density to an ordinary Selberg
density is classical \cite{ForresterWarnaar2008}.  In particular, the
parameter $\alpha=1/2$ arising in type $D$ below is exactly the squaring
Jacobian.  Forrester--Rains study a Selberg density with an external marked
point, split the variables at that point, and calculate the connection matrix
using Dotsenko--Fateev chambers \cite{ForresterRains2012}.  The adjacent
marked-point formulas in the Forrester--Warnaar survey are published as
(4.6)--(4.8), and appear as (3.60)--(3.62) in arXiv:0710.3981v1.

Mimachi states the connection coefficients first as finite sine expressions
and then in $q$-Racah form \cite[Theorem~2.2]{Mimachi2013}.  There the base
$q$ is fixed by the exponent of the local system, so a formal limit $q\to1$
would change the local system rather than specialize the fixed exponent used
here.  In our calculation the scaling coordinate is one of the integration
variables, and \cref{thm:criticality} removes it before the chamber recurrence
is applied.

\subsection{Terminating basic-hypergeometric identities}

The type-$D$ sum uses the two terminating $q$-Chu--Vandermonde formulas,
Appendix~II, equations (II.7) and (II.6), in the second edition of
Gasper--Rahman \cite{GasperRahman2004}; they are also DLMF
17.6.2 and 17.6.3 \cite{DLMFQChu}.  After termination, each identity is an
equality of rational functions of the base and the parameters.  It therefore
remains valid at a root of unity whenever the finitely many denominator
factors are nonzero.  The required nonvanishing check is given in
\cref{sec:type-d}; no convergence theorem for a nonterminating series is used.

\subsection{Other results used}

The leading-pole formula \cite{MatuzasLeading2026} is used only for
comparison and does not enter the proofs below.  For the exceptional
arrangements we use the theorem that a hyperplane restriction of a Weyl
arrangement is free with the largest Weyl exponent deleted
\cite[Theorem~2.14]{AbeTeraoTran2020}.

\section{Type \texorpdfstring{$A$}{A}}\label{sec:type-a}

In type $A$, a simple wall fuses two adjacent cumulative coordinates.  Using
the fused coordinate as the scale gives a mixed Dotsenko--Fateev chamber.  The
Jacobian cancels because $q_A(N-1)=r-1$, and the chamber recurrence determines
all wall periods from an endpoint value.

\begin{theorem}[Type $A_r$]\label{thm:type-a}
Let $r\ge3$, put
\[
 q_A=\SecondPoleQA,\qquad \theta_A=\frac{\pi}{r+2},
\]
and define
\[
 J_r^{\mathrm{end}}=
 \frac{\displaystyle\prod_{a=1}^{r-2}\Gamma\!\left(\frac{a}{r+2}\right)
       \displaystyle\prod_{a=3}^{r}\Gamma\!\left(\frac{a}{r+2}\right)}
 {(r-2)!\,\Gamma\!\left(\frac{r+1}{r+2}\right)^{r-2}}.
\]
The $k$th simple-wall period is
\[
 \mathcal P_{A_r,k}(q_A)=J_r^{\mathrm{end}}
 \frac{\sin(k\theta_A)\sin((k+1)\theta_A)}
      {\sin\theta_A\sin(2\theta_A)},\qquad 1\le k\le r.
\]
Hence
\[
 \sum_{k=1}^{r}\mathcal P_{A_r,k}(q_A)
 =J_r^{\mathrm{end}}\frac{r+2}{4\sin^2\theta_A}.
\]
Combining the wall sum with \cite[Theorem~3.3]{MatuzasSecondPole} gives
\[
 \operatorname*{Res}_{s=q_A}\xi_{A_r}(s)
 =\SecondPoleResidue{q_A}{(r-1)(r+2)/2}
  {J_r^{\mathrm{end}}(r+2)/(4\sin^2\theta_A)}.
\]
\end{theorem}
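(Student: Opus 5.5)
We work in cumulative coordinates. For $A_r$ the positive coroots are $e_a-e_b$ with $a<b$, and in simple-coroot coordinates $e_a-e_b$ evaluates to $m_a+\dots+m_{b-1}$. Put $y_0=0$ and $y_j=m_1+\dots+m_j$, so that $P_{A_r}=\prod_{0\le a<b\le r}(y_b-y_a)$ is a Vandermonde product on $0=y_0<y_1<\dots<y_r$, and $N=r(r+1)/2$. The wall $F_k$ is $y_{k-1}=y_k$. Deleting the vanishing factor leaves $Q_k$, a Vandermonde on the $r$ distinct points in which the fused point $w=y_k$ occurs twice in every difference with another point. This gives $N-1=(r-1)(r+2)/2$, so $q_A(N-1)=r-1$ as required.

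The reversal $y_j\mapsto y_r-y_{r-j}$ preserves $P_{A_r}$ and maps $F_k$ to $F_{r+1-k}$. Since $(r+2)\theta_A=\pi$, the claimed weight is invariant under $k\mapsto r+1-k$. It therefore suffices to treat $2\le k\le r$, where the fused point is not the origin.

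For these $k$, use \cref{thm:criticality} with the section $w=1$. The projective period becomes an integral over the remaining $n=r-2$ points, ordered, with $p=k-2$ of them in $(0,1)$ and $n-p=r-k$ in $(1,\infty)$. The integrand is $\prod_i t_i^{-q_A}|1-t_i|^{-2q_A}\prod_{i<j}|t_i-t_j|^{-q_A}$; the constant factor $(w-0)^{-2q_A}$ equals $1$ on this section. Reading off the Dotsenko--Fateev parameters gives
\[
\alpha=1-q_A,\qquad \beta=1-2q_A,\qquad \gamma=-\tfrac{q_A}{2}=-\tfrac{1}{r+2}=-\tfrac{\theta_A}{\pi}.
\]
The ordered chamber is exactly $K_{n,p}=\DF_{n,p}/(p!(n-p)!)$, hence $\mathcal P_{A_r,k}(q_A)=K_{r-2,k-2}$.

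The endpoint $p=0$ (wall $k=2$) has all points in $(1,\infty)$. Inverting $t\mapsto 1/t$ gives a Selberg integral on $(0,1)^{r-2}$ with shifted $\alpha$ and the same $\beta,\gamma$. We divide by $(r-2)!$ and evaluate with Selberg's formula. The gamma arguments are of the form $a/(r+2)$, and after cancellation of the $\Gamma(1+j\gamma)$ factors this should give $J_r^{\mathrm{end}}$. We will check the Selberg inequalities at this specialization; since $\gamma<0$, the conditions $\Re\gamma>-1/n$ and $\Re\gamma>-\alpha/(n-1)$ must be verified explicitly. Then $\sin(2\theta_A)\sin(3\theta_A)/(\sin\theta_A\sin 2\theta_A)$ should be the correct normalization at $k=2$. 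Note that this ratio is not $1$, so the inversion must introduce exactly that factor, or else $J^{\mathrm{end}}$ corresponds to the reflected endpoint $k=1\leftrightarrow r$. Reconciling this normalization is the first thing to check.

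For the other walls we apply the Dotsenko--Fateev recurrence in $p$. With $\gamma=-\theta_A/\pi$, the arguments become integer multiples of $\theta_A$ modulo $\pi$:
\[
\alpha+(p-1)\gamma\equiv -(p+1)\theta_A/\pi,\qquad \alpha+\beta+(n+p-2)\gamma \equiv\text{a multiple of }\theta_A/\pi .
\]
The ratio $K_{n,p}/K_{n,p-1}$ should collapse to $\sin((k+1)\theta_A)/\sin((k-1)\theta_A)$. Telescoping then gives the product $\sin(k\theta_A)\sin((k+1)\theta_A)$ up to the endpoint constant.

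The main obstacle is justifying the recurrence at negative $\gamma$. We must check absolute convergence of each adjacent pair of chambers, using $q_A<1$ and that the pairwise exponents stay above $-1$, together with nonvanishing of the sine denominators $\sin(\pi p\gamma)$ and $\sin(\pi(\alpha+(p-1)\gamma))$ for $1\le p\le r-2$. These are nonzero because the arguments are $j\theta_A$ with $0<j<r+2$. The sign conventions for $\sin$ of negative multiples must also be tracked so that the ratios come out positive.

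Finally, the wall sum follows from
\[
\sum_{k=1}^r\sin k\theta_A\sin(k+1)\theta_A=\tfrac12\sum_{k=1}^{r}\bigl(\cos\theta_A-\cos(2k+1)\theta_A\bigr)=\tfrac{r+2}{2}\cos\theta_A,
\]
which uses that the odd-cosine sum equals $-\cos\theta_A$ when $(r+2)\theta_A=\pi$. Dividing by $\sin\theta_A\sin2\theta_A=2\sin^2\theta_A\cos\theta_A$ gives $(r+2)/(4\sin^2\theta_A)$. Substituting into \cite[Theorem~3.3]{MatuzasSecondPole} with $N-1=(r-1)(r+2)/2$ yields the residue.
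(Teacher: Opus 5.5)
Your reduction follows the paper's: cumulative coordinates, the fused point as scale, parameters $(\alpha,\beta,\gamma)=(1-q_A,1-2q_A,-q_A/2)$, $\mathcal P_{A_r,k}=K_{r-2,k-2}$ for $2\le k\le r$, the reversal symmetry, and the same trigonometric wall sum. Those parts are sound. The recurrence does collapse to $\sin((k+1)\theta_A)/\sin((k-1)\theta_A)$, because the two negative sines $\sin(\pi(n-p+1)\gamma)$ and $\sin(\pi p\gamma)$ cancel in sign.

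The gap is the anchoring constant. It is the only non-trigonometric content of the theorem, and you leave it as ``the first thing to check''. Your expectation that the $p=0$ chamber (wall $k=2$) evaluates to $J_r^{\mathrm{end}}$ after inversion is wrong. Inversion sends $\alpha$ to $\alpha'=1-\alpha-\beta-2(n-1)\gamma=\tfrac{r-2}{r+2}$. At that parameter the factors $\Gamma(1+(j+1)\gamma)$ and $\Gamma(\alpha'+\beta+(n+j-1)\gamma)$ no longer cancel termwise. Selberg's formula and the reflection formula instead give
\[
 \frac{1}{(r-2)!}\,S_{r-2}\Bigl(\tfrac{r-2}{r+2},\tfrac{r-2}{r+2},-\tfrac1{r+2}\Bigr)
 =J_r^{\mathrm{end}}\,
 \frac{\Gamma(\tfrac1{r+2})\,\Gamma(\tfrac{r+1}{r+2})}
      {\Gamma(\tfrac3{r+2})\,\Gamma(\tfrac{r-1}{r+2})}
 =J_r^{\mathrm{end}}\,\frac{\sin3\theta_A}{\sin\theta_A}.
\]
This is exactly the claimed weight at $k=2$, so the statement is consistent. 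But your proposal, as written, does not establish the value of the constant.

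The fix, which the paper uses, is to anchor at the other end: the wall $k=r$ (equivalently $k=1$ under your reversal), i.e.\ $p=n$. Your range $2\le k\le r$ already contains it. There the claimed weight is exactly $1$ because $(r+2)\theta_A=\pi$. The chamber is already a Selberg integral on $(0,1)^{r-2}$, with no inversion, at $(\alpha,\beta,\gamma)=(\tfrac r{r+2},\tfrac{r-2}{r+2},-\tfrac1{r+2})$. Then:
\begin{itemize}
\item $\alpha+j\gamma=\tfrac{r-j}{r+2}$ gives $\prod_{a=3}^{r}\Gamma(\tfrac a{r+2})$;
\item $\beta+j\gamma=\tfrac{r-2-j}{r+2}$ gives $\prod_{a=1}^{r-2}\Gamma(\tfrac a{r+2})$;
\item $1+(j+1)\gamma=\alpha+\beta+(n+j-1)\gamma=\tfrac{r+1-j}{r+2}$, so these cancel term by term.
\end{itemize}
What remains is $J_r^{\mathrm{end}}$, and telescoping the recurrence downward from $k=r$ gives every wall.

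A smaller point concerns convergence. Pairwise exponents above $-1$ do not imply absolute convergence of a multiple integral with $\gamma<0$. You need the cluster conditions, for clusters of $m$ points:
\begin{itemize}
\item $\binom m2 q_A<m-1$ for $m$ free points colliding;
\item $\bigl(\binom m2+m\bigr)q_A<m$ for $m$ points collapsing to $0$;
\item $\bigl(\binom m2+2m\bigr)q_A<m$ at the doubled point $1$;
\item the exponent $\alpha+\beta-1+\gamma(2n-m-1)=\tfrac{m-r+1}{r+2}<0$ for $m$ points escaping to infinity.
\end{itemize}
All of these hold for $m\le r-2$.
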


\begin{proof}
\subsection{Reduction to a mixed chamber}

Use cumulative coordinates
\[
 0=y_0<y_1<\cdots<y_r=1,
\]
so that the root product is the Vandermonde.  On the $k$th wall,
$y_{k-1}=y_k$; after deleting that vanishing difference, the fused point has
doubled incidence with all other points.  Denote the resulting period by
$J_{r,k}$.

For $2\le k\le r-1$, put $t=y_{k-1}$ and divide every free point by $t$.
There are $p=k-2$ points in $(0,1)$ and $r-k$ points in $(1,\infty)$.  The
Jacobian of the inverse change of variables is $t^{r-1}$.  The wall product has
\[
 \binom r2+(r-1)=\frac{(r-1)(r+2)}2
\]
factors.  At $q_A=\SecondPoleQA$, the product contributes $t^{-(r-1)}$,
so the Jacobian cancels.  The remaining density is
\[
 \prod_i x_i^{-q_A}|1-x_i|^{-2q_A}
 \prod_{i<j}|x_i-x_j|^{-q_A}.
\]
Hence
\[
 J_{r,k}=
 \frac{\DF_{r-2,k-2}(1-q_A,1-2q_A,-q_A/2)}
      {(k-2)!(r-k)!}.
\]
The two factorials remove the orderings within the intervals.  If a
collision block of $m\ge2$ ordinary points does not contain the fused point,
its singular degree is $\binom m2q_A<m-1$.  If the block contains the fused
point, its degree is
\[
 \left(\binom m2+m-1\right)q_A
 =(m-1)\frac{m+2}{r+2}<m-1
\]
for every proper block $m<r$.  Thus the mixed chambers converge.

\subsection{Recurrence and endpoint evaluation}

Insert
\[
 n=r-2,\qquad \alpha=1-q_A,\qquad
 \beta=1-2q_A,\qquad \gamma=-q_A/2
\]
into the chamber recurrence.  The denominator sines are nonzero for
$3\le k\le r$.  After factorial normalization, the four sine arguments
reduce to complementary multiples of $\theta_A$, giving
\[
 \frac{J_{r,k}}{J_{r,k-1}}
 =\frac{\sin((k+1)\theta_A)}{\sin((k-1)\theta_A)}.
\]
This telescopes.  The reflection $y\mapsto1-y$ identifies the two endpoints.  The Selberg
evaluation of either endpoint gives $J_r^{\mathrm{end}}$ in
\cref{thm:type-a}.  At the specialized parameters, the numerator factor
$\Gamma(1+(j+1)\gamma)$ and the degree-balancing denominator factor are
identical and cancel term by term.  The remaining Selberg inequalities are
strict for every $r\ge3$.

Summing the sine weights uses
\[
 \sum_{k=1}^{r}\sin(k\theta_A)\sin((k+1)\theta_A)
 =\frac{r+2}{2}\cos\theta_A.
\]
Combining this wall sum with \cite[Theorem~3.3]{MatuzasSecondPole} gives
the residue formula in \cref{thm:type-a}.
\end{proof}

At $A_3$, the coefficient of the endpoint gamma product is
$(5+\sqrt5)/10$, as recorded in \cref{eq:a3-check}.  This is exactly
Au's published value.

\section{Types \texorpdfstring{$B$ and $C$}{B and C}}\label{sec:type-bc}

The signed-coordinate coroot product has two wall geometries.  Collision
walls fold to the same mixed-chamber problem, while the coordinate boundary
folds directly to a Selberg endpoint.  Keeping those contributions separate
is essential: $B_r$ and $C_r$ share $r,N,h$ and the pole location, but their
short/long coroot scalars differ.

\begin{theorem}[Types $B_r$ and $C_r$]\label{thm:type-bc}
Let $r\ge2$, put
\[
 q_{BC}=\SecondPoleQBC,\qquad D=2(r+1),\qquad
 \theta_{BC}=\frac{\pi}{D},\qquad n=r-2,
\]
and set $a_B=2$, $a_C=1$.  Define
\[
 G_r^{\mathrm{col}}=
 \frac{\displaystyle\prod_{m=3}^{r}\Gamma(m/D)
       \displaystyle\prod_{m=r+1}^{2r-2}\Gamma(m/D)}
      {\Gamma((2r+1)/D)^{r-2}},
\]
\[
 G_r^{\mathrm{bd}}=
 \frac{\displaystyle\prod_{m=1}^{r-2}\Gamma(m/D)
       \displaystyle\prod_{m=r+3}^{2r}\Gamma(m/D)}
      {\Gamma((2r+1)/D)^{r-2}},
\]
and, for $X\in\{B,C\}$,
\[
 A_X^{\mathrm{col}}=
 \frac{2^{-n-q_{BC}}a_X^{q_{BC}}}{n!}G_r^{\mathrm{col}},\qquad
 A_X^{\mathrm{bd}}=
 \frac{2^{-n}a_X^{-(r-1)q_{BC}}}{n!}G_r^{\mathrm{bd}}.
\]
For $1\le k\le r-1$ the collision walls satisfy
\[
 \mathcal P_{X_r,k}(q_{BC})=A_X^{\mathrm{col}}
 \frac{\sin(k\theta_{BC})\sin((k+1)\theta_{BC})}
      {\sin\theta_{BC}\sin(2\theta_{BC})},
\]
whereas the coordinate boundary has period $A_X^{\mathrm{bd}}$.  Therefore
\[
 \sum_{i=1}^{r}\mathcal P_{X_r,i}(q_{BC})
 =A_X^{\mathrm{col}}\frac{r-1}{4\sin^2\theta_{BC}}
  +A_X^{\mathrm{bd}}.
\]
Combining the wall sum with \cite[Theorem~3.3]{MatuzasSecondPole} gives
\[
 \operatorname*{Res}_{s=q_{BC}}\xi_{X_r}(s)
 =\SecondPoleResidue{q_{BC}}{r^2-1}
 {A_X^{\mathrm{col}}(r-1)/(4\sin^2\theta_{BC})+A_X^{\mathrm{bd}}}.
\]
\end{theorem}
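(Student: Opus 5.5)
We follow the type-$A$ proof, now in signed coordinates. Write the positive coroots of $X_r$ as $e_i\pm e_j$ ($i<j$) together with the short or long coordinate coroots $a_Xe_i$ on the dominant chamber $x_1>x_2>\cdots>x_r>0$. For $1\le k\le r-1$ the $k$th simple wall is the collision $x_k=x_{k+1}$, and the $r$th wall is the boundary $x_r=0$. The count $N-1=r^2-1$ together with $q_{BC}(r^2-1)=r-1$ is exactly the hypothesis of \cref{thm:criticality}. Hence on each wall the density is $dt/t$ times a projective density. Before reducing anything, we must normalize the wall measure $\SecondPoleWallMeasure$ in simple-coroot coordinates against the Lebesgue measure in the $x$-coordinates. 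That Jacobian will be one source of the powers of $2$ and $a_X$ in $A_X^{\mathrm{col}}$ and $A_X^{\mathrm{bd}}$.

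\emph{Boundary wall.} On $x_r=0$ the factor $a_Xx_r$ is deleted. The factors $x_i\pm x_r$ collapse to $x_i^2$, and the product becomes
\[
a_X^{r-1}\prod_{i<r}x_i\cdot\prod_{i<r}x_i^2\prod_{i<j<r}(x_i^2-x_j^2).
\]
We use $x_1$ as the scale and cancel the radial part. We then substitute $t_i=(x_{i+1}/x_1)^2$ for $i=1,\dots,n=r-2$. Each substitution contributes a Jacobian $\tfrac12 t_i^{-1/2}$, which gives the factor $2^{-n}$. The variable $x_1$ contributes the factors $(1-t_i)$. The result is an ordered Selberg integral $S_n(\alpha,\beta,\gamma)/n!$ with $\gamma=-q_{BC}/2$, $\beta=1-q_{BC}$, and $\alpha=\tfrac12-\tfrac{3}{2}q_{BC}$, the last coming from the $x_i^{-3q}$ factor together with the Jacobian. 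We will verify these parameters and the Selberg inequalities explicitly. Substituting $q_{BC}=1/(r+1)$, every gamma argument is a multiple of $1/D$. We then expect the factors $\Gamma(1+(j+1)\gamma)/\Gamma(1+\gamma)$ to telescope against the degree-balancing denominator, as in type $A$, leaving $G_r^{\mathrm{bd}}$. The power $a_X^{-(r-1)q_{BC}}$ is read off from the displayed product.

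\emph{Collision walls.} On $x_k=x_{k+1}$ we delete $x_k-x_{k+1}$ and take the fused value $x_k$ as the scale. After squaring, $t_j=(x_j/x_k)^2$ for the $r-2$ other indices, so $p$ points lie in $(1,\infty)$ and the rest in $(0,1)$. The fused point gives the doubled factors $|1-t_j|$. The pairs $(x_j\pm x_k)$ combine into $|x_j^2-x_k^2|$, the pairs $(x_i\pm x_j)$ into $|t_i-t_j|$, and the coordinate factors give $t_j^{-q/2}$ after the $\tfrac12t^{-1/2}$ Jacobian. The surviving fused factors $2x_k$ and $a_Xx_k$ contribute the constant $2^{-q}a_X^{-q}$. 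The paper states instead $a_X^{+q}$ in $A_X^{\mathrm{col}}$, and we will reconcile this through the wall-measure normalization. This produces $\DF_{n,p}(\alpha',\beta',-q/2)/(p!(n-p)!)$ with $\beta'=1-2q$ and $\alpha'=\tfrac12-q/2$. Convergence is checked by block degree counts, exactly as in type $A$.

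We then insert these parameters into the Dotsenko--Fateev recurrence. The target is to show that the four sines collapse to $\sin((k+1)\theta_{BC})/\sin((k-1)\theta_{BC})$, using complementary angles modulo $\pi$ with $D=2(r+1)$. We anchor the resulting chain at the endpoint $k=1$, where the chamber is a pure Selberg integral over $(0,1)^n$, and evaluate that endpoint to obtain $G_r^{\mathrm{col}}$ with normalized weight $1$. The wall sum then follows from the identity $\sum_{k=1}^{r-1}\sin(k\theta)\sin((k+1)\theta)=\tfrac{r-1}{2}\cos\theta$, valid when $(r+1)\theta=\pi/2$, together with $\sin2\theta=2\sin\theta\cos\theta$. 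Finally, \cite[Theorem~3.3]{MatuzasSecondPole} gives the residue.

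The main obstacle is the bookkeeping of constants. The Jacobian from simple-coroot coordinates to $x$-coordinates differs between $B$ and $C$ and must reproduce $2^{-q}a_X^{q}$ on the collision walls and $a_X^{-(r-1)q}$ on the boundary. We will also have to check that the specialized sine ratio really telescopes to the stated form, including orientation: whether the chamber $k=1$ has all its points in $(0,1)$ or in $(1,\infty)$. We will confirm the normalization against $B_2$ and $C_2$ before trusting the general constants.
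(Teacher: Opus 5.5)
Your route is the paper's route. On the collision walls you use squared mixed Dotsenko--Fateev chambers with $(\alpha,\beta,\gamma)=(r/D,\,2(r-1)/D,\,-1/D)$. At $x_r=0$ you use a separate Selberg integral with $((r-2)/D,\,2r/D,\,-1/D)$. Both endpoints rely on the cancellation $\Gamma(1+(j+1)\gamma)=\Gamma(\alpha+\beta+(n+j-1)\gamma)$. The recurrence is anchored at $k=1$, which is indeed the pure $(0,1)^n$ chamber because $z=x_1$ is then the largest coordinate, and the argument closes with the sine sum.

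The one step that fails as written is the collision constant.

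\begin{itemize}
\item On $x_k=x_{k+1}=z$ none of the $r$ coordinate coroots $a_Xx_i$ vanishes. The restricted product is therefore $2a_X^r z^3\prod_{j}x_j\prod_j(x_j^2-z^2)^2\prod(x_i^2-x_j^2)$.
\item You dropped $a_Xx_{k+1}$, and when you wrote $t_j^{-q/2}$ you also dropped the $r-2$ unfused factors $a_Xx_j$. On the section $z=1$ the constant is $2^{-q}a_X^{-rq}$, not $2^{-q}a_X^{-q}$.
\item The wall-measure Jacobian is $\mathrm du=a_X\,\mathrm dx$ on every collision wall. The map to $(m_j)_{j\ne k}$ is triangular with diagonal $(1,\dots,1,a_X)$, since $m_r=a_Xx_r$, or $m_r=a_Xz$ when $k=r-1$.
\item On the boundary wall the map is unimodular, so $a_X^{-(r-1)q}$ stands there, as you read it off.
\item The correct collision constant is therefore $2^{-q}a_X^{1-rq}=2^{-q}a_X^{q}$, using $1-rq_{BC}=q_{BC}$.
\end{itemize}

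With your count the same Jacobian would give $a_X^{1-q}$. For example, $\mathcal P_1(B_2)$ would come out as $2^{1/3}$ instead of $1$. So the reconciliation you defer cannot close without recounting; your planned $B_2$ check would catch this.

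A smaller point concerns the labelling of $p$. In the $D_{n,p}$ convention, $p$ counts points in $(0,1)$, so $p=r-k-1$. Then $\sin(\pi(\alpha+\beta+(n+p-2)\gamma))=\sin(\pi-p\theta_{BC})$ cancels $\sin(\pi p\gamma)$ up to sign. The recurrence becomes $K_p/K_{p-1}=\sin((r-p-1)\theta_{BC})/\sin((r-p+1)\theta_{BC})$, which is exactly your target ratio $\sin((k+1)\theta_{BC})/\sin((k-1)\theta_{BC})$ for $\mathcal P_k/\mathcal P_{k-1}$. If you keep $p$ as the number of points in $(1,\infty)$, the chamber must be written $\DF_{n,n-p}$.
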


\begin{proof}
\subsection{Restricted products and the squaring substitution}

In chamber coordinates $x_1>\cdots>x_r>0$, the positive-coroot product can be
written
\[
 \Delta_X(x)=a_X^r\prod_{i=1}^r x_i
 \prod_{1\le i<j\le r}(x_i^2-x_j^2),
 \qquad a_B=2,\quad a_C=1.
\]
There are $r-1$ collision walls $x_k=x_{k+1}$ and one coordinate wall
$x_r=0$.  On a collision wall, write the fused coordinate as $z$.  Deleting
the vanishing factor gives
\[
 2a_X^r z^3\prod_{j\ne k,k+1}x_j
 \prod_{j\ne k,k+1}(x_j^2-z^2)^2
 \prod_{\substack{i<j\\i,j\ne k,k+1}}(x_i^2-x_j^2).
\]
After projectivization by $z$ and the substitution
$t_j=(x_j/z)^2$, this is the Dotsenko--Fateev density with
\[
 n=r-2,\qquad
 \alpha=\frac{r}{2(r+1)},\qquad
 \beta=\frac{r-1}{r+1},\qquad
 \gamma=-\frac1{2(r+1)}.
\]
The $t^{\alpha-1}$ power contains the surviving coordinate factor and the
Jacobian $\dd x=\dd t/(2\sqrt t)$.  The doubled factors at the fused point
give $|1-t|^{\beta-1}$, while the squared Vandermonde gives exponent
$2\gamma$.  This identifies the collision integral with the stated mixed chamber.  All
endpoint and diagonal exponents are greater than $-1$ for $r\ge2$.  If a
cluster of $m$ variables tends jointly to infinity, the exponent governing
radial convergence is
\[
 \alpha+\beta-1+\gamma(2n-m-1)=\frac{m-r+1}{2(r+1)}<0,
 \qquad 1\le m\le n.
\]
Hence the mixed chamber converges absolutely.

On the coordinate wall the restricted product is
\[
 a_X^{r-1}\prod_{i=1}^{r-1}x_i^3
 \prod_{1\le i<j<r}(x_i^2-x_j^2).
\]
Projectivizing by $x_1$ and squaring produces the separate Selberg endpoint
encoded by $A_X^{\mathrm{bd}}$.  Its specialized Selberg parameters satisfy
the inequalities of \cref{sec:known-input}; the same gamma-factor
cancellation that occurs at the collision endpoint leaves the products in
\cref{thm:type-bc}.

\subsection{Recurrence and the wall sum}

The factorial-normalized recurrence becomes
\[
 \frac{K_p}{K_{p-1}}
 =\frac{\sin((r-p-1)\theta_{BC})}
        {\sin((r-p+1)\theta_{BC})}.
\]
Since $p=r-k-1$, moving to the next collision wall gives
\[
 \frac{\Per_{X_r,k+1}}{\Per_{X_r,k}}
 =\frac{\sin((k+2)\theta_{BC})}{\sin(k\theta_{BC})}.
\]
The denominator is nonzero throughout the stated range.  The product
telescopes to the collision law in \cref{thm:type-bc}.  At
$(r+1)\theta_{BC}=\pi/2$ its sum is
$(r-1)/(4\sin^2\theta_{BC})$.  The boundary period is then added once.
The boundary-to-collision quotient is an exact quotient of gamma products;
in general it need not be cyclotomic.
\end{proof}

\section{Type \texorpdfstring{$D$}{D}}\label{sec:type-d}

Type $D$ has no coordinate wall.  Its simple walls consist of chain
collisions and the two fork walls.  After projectivization they have the same
restricted product.  Squaring therefore gives one Dotsenko--Fateev chamber
family; the two fork walls together contribute one copy of its endpoint
chamber.

\begin{theorem}[Type $D_r$]\label{thm:type-d}
Let $r\ge4$ and put
\[
 M=r(r-1)-1,\qquad q_D=\SecondPoleQD,\qquad n=r-2,\qquad h=r-1,
\]
\[
 \theta_D=\frac{h\pi}{2M},\qquad \delta_D=\frac{\pi}{2M},\qquad
 \beta=1-2q_D,\qquad \gamma=-\frac{q_D}{2},
\]
\[
 \alpha_0=-\frac12+(r-1)q_D
 =\frac{(r-1)(r-2)+1}{2M}.
\]
Define the Selberg endpoint
\[
 K_{r,0}=\frac1{(r-2)!}\prod_{j=0}^{r-3}
 \frac{\Gamma(\alpha_0+j\gamma)\Gamma(\beta+j\gamma)
       \Gamma(1+(j+1)\gamma)}
      {\Gamma(\alpha_0+\beta+(r+j-3)\gamma)\Gamma(1+\gamma)}
\]
and the finite cyclotomic factor
\[
 \Sigma_r=\frac{\sin(2\theta_D)}
 {4\sin\delta_D\cos(h\theta_D)}
 \prod_{k=1}^{r-2}\tan(k\theta_D).
\]
Then the complete wall sum is
\[
 \sum_{i=1}^{r}\mathcal P_{D_r,i}(q_D)
 =2^{4-r-q_D}K_{r,0}\Sigma_r.
\]
Combining the wall sum with \cite[Theorem~3.3]{MatuzasSecondPole} gives
\[
 \operatorname*{Res}_{s=q_D}\xi_{D_r}(s)
 =\SecondPoleResidue{q_D}{M}
 {2^{4-r-q_D}K_{r,0}\Sigma_r}.
\]
Moreover $\Sigma_r\in\mathbf Q(\zeta_M)^+$.
\end{theorem}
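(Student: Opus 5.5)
We follow the pattern of the type $B/C$ proof, using the even-orthogonal coroot product in chamber coordinates $x_1>\cdots>x_{r-1}>|x_r|$:
\[
 \Delta_D(x)=\prod_{1\le i<j\le r}(x_i^2-x_j^2),\qquad N=r(r-1),\qquad N-1=M .
\]
\textbf{Restricted products.} The chain walls are $x_k=x_{k+1}$ for $1\le k\le r-1$. The fork walls are $x_{r-1}=x_r$ and $x_{r-1}=-x_r$. On every such wall we name the fused coordinate $z$ and delete the vanishing factor. What remains is
\[
 2z^2\prod_{j}(x_j^2-z^2)^2\prod_{i<j}(x_i^2-x_j^2),
\]
where the products run over the $r-2$ surviving coordinates. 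This has degree $2+4(r-2)+(r-2)(r-3)=M$, as \cref{thm:criticality} requires.

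\textbf{Reduction to a chamber.} We projectivize by $z$, so the radial factor $dt/t$ drops out, and substitute $t_j=(x_j/z)^2$.
\begin{itemize}
\item The Jacobian $\dd x=\dd t/(2\sqrt t)$ gives $t^{-1/2}$.
\item The doubled incidence with the fused point gives $|1-t|^{-2q_D}$, so $\beta=1-2q_D$.
\item The squared Vandermonde gives $2\gamma=-q_D$.
\item The factor $z^{2}$, together with the homogeneity balance coming from $q_D M=r-1$, shows up after inversion $t\mapsto1/t$ as the endpoint exponent $\alpha_0=-\tfrac12+(r-1)q_D$.
\end{itemize}
We have to check this exponent bookkeeping carefully. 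There is one subtlety: in type $D$ the smallest coordinate $x_r$ ranges over $(-|x_{r-1}|,|x_{r-1}|)$. After squaring, each chain wall becomes a sum over the sign of $x_r$, which is a factor $2$. The two fork walls each restrict $x_r$ to a signed half. Together they form one copy of the endpoint chamber $p=0$ or $p=n$.

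This gives the $k$th chain wall as $2^{c}K_{r,p}$, with $p=r-1-k$ points below $1$ and an explicit power of $2$. Tracking all powers of $2$ should produce the prefactor $2^{4-r-q_D}$, with the $2^{-q_D}$ coming from the factor $2$ in the restricted product. We then check convergence as in type $B/C$. For a cluster of $m$ variables escaping to infinity the radial exponent is negative for $m\le n$, and all diagonal and endpoint exponents exceed $-1$.

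\textbf{Evaluating the chambers.} We apply the Dotsenko--Fateev recurrence with parameters $(n,\alpha_0,\beta,\gamma)$ and express $K_{r,p}/K_{r,0}$ as a telescoping product of sine ratios. The endpoint $K_{r,0}$ is exactly the Selberg product in the statement. Here $\alpha_0$ is not a multiple of $\gamma$ offset by an integer, unlike in types $A$ and $B/C$, so the sines do not collapse to consecutive multiples of a single angle. Instead the ratio takes the form
\[
 \frac{\sin(\pi(n-p+1)\gamma)\,\sin(\pi(\alpha_0+\beta+(n+p-2)\gamma))}{\sin(\pi p\gamma)\,\sin(\pi(\alpha_0+(p-1)\gamma))},
\]
with all arguments rational multiples of $\pi/(2M)$. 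Writing $q=e^{2\pi i\gamma}$, a root of unity of order dividing $2M$, the product becomes a $q$-shifted factorial ratio. The total wall sum $\sum_p K_{r,p}/K_{r,0}$ is then a terminating ${}_2\phi_1$ evaluated at a root of unity.

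\textbf{Main obstacle.} The hardest step is summing this series. We would try $q$-Chu--Vandermonde (II.6/II.7) first, after matching the parameters $q^{-n}$ and $b,c$ with $c$ determined by $\alpha_0$. We expect the result to be a ratio of $q$-Pochhammer symbols of length $r-2$, and hence a product of sines. The closed form should then reorganize into $\prod_{k=1}^{r-2}\tan(k\theta_D)$ times the prefactor $\sin(2\theta_D)/(4\sin\delta_D\cos(h\theta_D))$, using $\theta_D=h\delta_D$ and $2M\delta_D=\pi$ to pair complementary angles.

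We must also verify that no denominator factor $1-q^j$ or $1-cq^j$ vanishes. This reduces to checking that $M\nmid$ the relevant integers, which holds because $M=r(r-1)-1$ exceeds all indices appearing in the sum.

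\textbf{Cyclotomic field.} $\Sigma_r$ is a product of sines, cosines and tangents at angles in $\tfrac{\pi}{2M}\mathbb Z$. Since $h$ is even exactly when $r$ is odd, we would show that the odd-multiple angles occur in pairs that combine into values in $\mathbf Q(\zeta_M)^+$, by the standard fact that ratios such as $\sin(a\pi/2M)/\sin(b\pi/2M)$ with $a\equiv b\pmod 2$ lie in the real subfield. This parity bookkeeping is the last point to check.
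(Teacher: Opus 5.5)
Your overall route is the paper's: square to a single Dotsenko--Fateev chamber family, apply the recurrence, use the Selberg endpoint, and sum a terminating ${}_2\phi_1$ at a root of unity. The gap is in the step you call the main obstacle, and your plan for it fails.

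\textbf{The summation step.} Absorb the $\pm\omega$ shifts using $M=n^2+3n+1$; this gives $e^{\pi i/M}Q=-Q^{n+3}$ for $Q=e^{h\pi i/M}$. The recurrence with $\alpha=1/2$ then yields
$K_{r,p}/K_{r,0}=Q^{-p}(Q^{-n};Q)_p(-Q^{n+3};Q)_p/\bigl((Q;Q)_p(-1;Q)_p\bigr)$.
So the wall sum is $F(Q^{-1})$, where $F(Z)={}_2\phi_1(Q^{-n},-Q^{n+3};-1;Q,Z)$.

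The $q$-Chu--Vandermonde identities (II.6) and (II.7) evaluate this polynomial only at $Z=Q$ and at $Z=cQ^n/b=Q^{-3}$. The point $Q^{-1}$ is neither. Your $\alpha_0$-normalization just reverses the terminating series. Neither that reversal nor rewriting in base $e^{2\pi i\gamma}=Q^{-1}$ helps: each again gives argument $q^{-1}$ against summable points $q$ and $q^{-3}$. So no matching of parameters makes Chu--Vandermonde apply.

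The missing idea is the three-term $q$-difference equation that $F$ satisfies coefficientwise:
\[
 (Q^3Z-1)F(QZ)+\bigl[(Q^{-n}-Q^{n+3})Z+1-Q\bigr]F(Z)+(Q-Z)F(Z/Q)=0 .
\]
Evaluate it at $Z=Q^{-2},Q^{-1},1$. This gives three linear equations in $F(Q^{-2}),F(Q^{-1}),F(1)$, whose remaining terms are the two Chu--Vandermonde values $F(Q^{-3})$ and $F(Q)$. Elimination gives
$F(Q^{-1})=(-1)^nQ^{-n(n+1)/2}(Q;Q)_n(1-Q^2)/(-1;Q)_{n+3}$.

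This is not the plain length-$(r-2)$ Chu--Vandermonde quotient you expected. The factor $(Q;Q)_n/\prod_{j=1}^n(1+Q^j)$ produces the tangents. The leftover $(1-Q^2)/\bigl(2(1+Q^{n+1})(1+Q^{n+2})\bigr)$ produces the prefactor $\sin(2\theta_D)/(4\sin\delta_D\cos(h\theta_D))$. Without this step the proposal does not reach $\Sigma_r$.

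\textbf{Set-up errors (all fixable).}
\begin{itemize}
\item On a chain or fork wall only the linear factor $x_k-x_{k+1}$ (resp.\ $x_{r-1}\mp x_r$) is deleted. The surviving companion factor is therefore $2z$, not $2z^2$. Your count $2+4(r-2)+(r-2)(r-3)$ equals $r(r-1)=N$, not $M$. With $z^2$ the density would not be homogeneous of degree $-(r-1)$ at $q_D$, so the radial reduction would fail.
\item The chain walls are only $1\le k\le r-2$. Listing $k=r-1$ and also the fork $x_{r-1}=x_r$ counts the $p=0$ chamber twice.
\item On the section $z=1$, the exponent at $t=0$ is $\alpha=1/2$, coming purely from the squaring Jacobian; the factor $z$ plays no role. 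The exponent $\alpha_0=1-\alpha-\beta-2(n-1)\gamma$ appears only when the all-above-$1$ fork chamber is inverted. If you run the recurrence with $\alpha_0$, your index counts points above $1$, and the Selberg endpoint is $p=n$, not $p=0$.
\item The prefactor $2^{4-r-q_D}$ needs one more factor of $2$ beyond the sign of $x_r$, the Jacobians $2^{-(r-2)}$, and $2^{-q_D}$. That factor is the determinant of the change from orthonormal wall coordinates to the simple-coroot evaluation coordinates in which $\mathcal P_i$ is defined. Without it you get $2^{3-r-q_D}$.
\end{itemize}
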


\begin{proof}
\subsection{Restricted product and chamber normalization}

On a chain or fork wall, let $y$ denote the fused coordinate and let
$x_1,\ldots,x_{r-2}$ be the others.  After deleting the vanishing simple
coroot, the restricted product is
\[
 Q_D(y,x)=2y\prod_{j=1}^{r-2}(y^2-x_j^2)^2
 \prod_{1\le i<j\le r-2}(x_i^2-x_j^2).
\]
The factor $2y$ is contributed by the companion root: $e_k+e_{k+1}$ on a
chain wall and the other fork root on a fork wall.  By
\cref{thm:criticality} we may use the section $y=1$, on which this factor is
the constant $2$.  With $t_j=x_j^2$ the density is
\[
 2^{-q_D-(r-2)}
 \prod_j t_j^{-1/2}|1-t_j|^{-2q_D}
 \prod_{i<j}|t_i-t_j|^{-q_D}\,\dd t,
\]
which has
\[
 \alpha=\frac12,\qquad \beta=1-2q_D,
 \qquad \gamma=-\frac{q_D}{2}.
\]
The half-power is therefore the standard squaring Jacobian.  The endpoint exponents $-1/2$ and $-2q_D$, and the diagonal exponent $-q_D$,
are all greater than $-1$ for $r\ge4$.  If $m$ variables tend jointly to
infinity, the exponent governing radial convergence is
\[
 \alpha+\beta-1+\gamma(2n-m-1)
 =\frac{(r-1)(m-r+1)-1}{2M}<0,
 \qquad 1\le m\le n.
\]
Thus every mixed chamber used below is absolutely convergent.

Let $K_{r,p}$ be the factorial-normalized mixed chambers with these
parameters.  The exact wall-lattice normalization is
\[
 \Per_{D_r,k}=2^{4-r-q_D}K_{r,r-k-1},
 \qquad 1\le k\le r-2,
\]
\[
 \Per_{D_r,r-1}=\Per_{D_r,r}=2^{3-r-q_D}K_{r,0}.
\]
The Dynkin diagram automorphism exchanging the two fork nodes preserves the
wall measure, so the two fork periods are equal.  Together they contribute one
full $p=0$ chamber, and therefore
\[
 \sum_{i=1}^r\Per_{D_r,i}
 =2^{4-r-q_D}\sum_{p=0}^{r-2}K_{r,p}.
\]

\subsection{The chamber recurrence}

The chamber recurrence specializes to
\[
 \rho_{r,p}:=\frac{K_{r,p}}{K_{r,p-1}}
 =\frac{\sin((r-p-1)(r-1)\omega)
         \sin((p(r-1)+1)\omega)}
        {\sin(p(r-1)\omega)
         \sin(((r-p+1)(r-1)-1)\omega)},
\]
where $\omega=\pi/(2M)$.  None of the denominator sines vanishes for
$1\le p\le r-2$, and every relative weight lies in $\Q(\zeta_M)^+$.  For $D_4$, triality leaves two
wall values.  The recurrence gives the central-to-outer ratio displayed in
\cref{eq:d4-ratio}.

\subsection{Evaluation of the finite chamber sum}

\begin{lemma}[Finite type-$D$ chamber sum]\label{lem:type-d-sum}
Let $n=r-2$, $h=r-1$, $M=n^2+3n+1$, and
$Q=\exp(h\pi i/M)$.  If $K_{r,p}$ is the factorial-normalized mixed chamber
used in the type-$D$ proof and
$W_{r,0}=1$, $W_{r,p}=K_{r,p}/K_{r,0}$, then
\[
 \sum_{p=0}^{n}W_{r,p}
 =(-1)^n Q^{-n(n+1)/2}
   \frac{(Q;Q)_n(1-Q^2)}{(-1;Q)_{n+3}}.
\]
Equivalently,
\[
 \sum_{p=0}^{n}W_{r,p}
 =\frac{\sin(2\theta_D)}
 {4\sin\delta_D\cos(h\theta_D)}
 \prod_{k=1}^{r-2}\tan(k\theta_D).
\]
\end{lemma}

\begin{proof}
Put $n=r-2$, $h=r-1$, and $\omega=\pi/(2M)$.  Since $rh=M+1$, the recurrence
in the preceding subsection can be rewritten as
\begin{equation}\label{eq:D-rho-sincos}
 \rho_{r,p}=
 \frac{\sin((ph+1)\omega)\cos(((p+1)h-1)\omega)}
      {\sin(ph\omega)\cos((p-1)h\omega)}.
\end{equation}
Let $\eta=e^{\pi i/M}$ and $Q=\eta^h$.  Replacing the sine and cosine factors
in \cref{eq:D-rho-sincos} by binomials gives
\[
 \rho_{r,p}=Q^{-1}
 \frac{(1-\eta Q^p)(1+\eta^{-1}Q^{p+1})}
      {(1-Q^p)(1+Q^{p-1})}.
\]
The identity $M=n^2+3n+1$ implies
\[
 \eta Q=-Q^{n+3},\qquad -\eta^{-1}Q^2=Q^{-n}.
\]
Consequently, with $(a;Q)_p=\prod_{j=0}^{p-1}(1-aQ^j)$,
\begin{equation}\label{eq:D-W-p}
 W_{r,p}=\frac{K_{r,p}}{K_{r,0}}
 =Q^{-p}\frac{(Q^{-n};Q)_p(-Q^{n+3};Q)_p}
              {(Q;Q)_p(-1;Q)_p}.
\end{equation}
Thus the required finite sum is $F(Q^{-1})$, where
\[
 F(Z)={}_2\phi_1\!\left(
 \begin{matrix}Q^{-n},-Q^{n+3}\\-1\end{matrix};Q,Z\right).
\]
The factor $(Q^{-n};Q)_p$ makes $F$ a polynomial of degree $n$.

Set
\[
 P=\frac{(Q^{-n-3};Q)_n}{(-1;Q)_n}.
\]
The two terminating $q$-Chu--Vandermonde identities
\cite[Appendix~II, (II.7) and (II.6)]{GasperRahman2004} give
\begin{equation}\label{eq:D-boundary-values}
 F(Q^{-3})=P,\qquad F(Q)=(-Q^{n+3})^nP=-Q^{-1}P.
\end{equation}
The last equality uses $n(n+3)=M-1$ and
$Q^M=(-1)^{n+1}$.

Write $F(Z)=\sum_{p=0}^n c_pZ^p$.  The quotient $c_p/c_{p-1}$ obtained from
\cref{eq:D-W-p} gives the coefficientwise identity
\begin{equation}\label{eq:D-q-difference}
 (Q^3Z-1)F(QZ)
 +\bigl[(Q^{-n}-Q^{n+3})Z+1-Q\bigr]F(Z)
 +(Q-Z)F(Z/Q)=0.
\end{equation}
Evaluate \cref{eq:D-q-difference} at $Z=Q^{-2},Q^{-1},1$.  The resulting
three linear equations determine $F(Q^{-2})$, $F(Q^{-1})$, and $F(1)$ from
the two values in \cref{eq:D-boundary-values}.  Eliminating the other two
unknowns gives
\begin{equation}\label{eq:D-elimination}
 \frac{F(Q^{-1})}{P}=
 \frac{Q^{3n}(1-Q)(1-Q^2)^2(1-Q^3)}
 {(1+Q^n)(1-Q^{2n+2})(1-Q^{2n+4})(1-Q^{n+3})}.
\end{equation}
Finally,
\[
 (Q^{-n-3};Q)_n
 =(-1)^nQ^{-n(n+7)/2}(Q^4;Q)_n.
\]
Substitution into $P$, followed by cancellation of the common finite factors
in \cref{eq:D-elimination}, yields
\[
 F(Q^{-1})=(-1)^nQ^{-n(n+1)/2}
 \frac{(Q;Q)_n(1-Q^2)}{(-1;Q)_{n+3}}.
\]
This proves the $q$-Pochhammer formula in the lemma.

All expressions above are finite.  Moreover,
$\ord(Q)=M$ for odd $n$ and $\ord(Q)=2M$ for even $n$, while
$M=n^2+3n+1>2n+4$ for $n\ge2$.  Hence no factor in $(Q;Q)_p$,
$(-1;Q)_p$, or the denominator of \cref{eq:D-elimination} vanishes.  In
particular, no convergence theorem for a nonterminating basic-hypergeometric
series is being applied at a root of unity.

To obtain the real product, write $Q=e^{2i\theta_D}$ and use
\[
 1-Q^k=-2ie^{ik\theta_D}\sin(k\theta_D),\qquad
 1+Q^k=2e^{ik\theta_D}\cos(k\theta_D).
\]
The phases cancel.  Since $r\theta_D=\pi/2+\delta_D$, the remaining real
factors give the second formula in the lemma.
\end{proof}

\subsection{Endpoint evaluation and residue}

The inversion symmetry of the mixed chamber expresses $K_{r,0}$ as the
Selberg gamma product in \cref{thm:type-d}.  The specialized parameters obey
the Selberg convergence inequalities, and every displayed gamma argument is
positive for $r\ge4$.  Multiplying that endpoint by the common coroot scalar
and by $\Sigma_r$ gives the complete wall sum.  The residue then follows from
\cite[Theorem~3.3]{MatuzasSecondPole}.
\end{proof}

\section{Low-rank and consistency checks}\label{sec:low-rank}

The rank-two cases determine the normalization conventions but do not model the
higher-rank proof: after projectivization, a rank-two wall is a point and no
integral remains.

\begin{proposition}[Rank-two normalization checks]\label{prop:rank-two}
The projective wall of a rank-two root system is a point.  In the wall order
used here,
\[
 (\mathcal P_1,\mathcal P_2)_{B_2}=(1,2^{-1/3}),\qquad
 (\mathcal P_1,\mathcal P_2)_{C_2}=(2^{-1/3},1),
\]
and
\[
 (\mathcal P_1,\mathcal P_2)_{G_2}=(18^{-1/5},2^{-1/5}).
\]
Combining the wall sum with \cite[Theorem~3.3]{MatuzasSecondPole} gives
\[
 \operatorname*{Res}_{s=\SecondPoleQBTwo}\xi_{B_2}(s)
 =\operatorname*{Res}_{s=\SecondPoleQBTwo}\xi_{C_2}(s)
 =\SecondPoleResidue{\SecondPoleQBTwo}{3}{1+2^{-1/3}},
\]
\[
 \operatorname*{Res}_{s=\SecondPoleQGTwo}\xi_{G_2}(s)
 =\SecondPoleResidue{\SecondPoleQGTwo}{5}
 {18^{-1/5}+2^{-1/5}}.
\]
\end{proposition}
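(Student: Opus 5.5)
The plan is to compute directly from the definition of $\Per_i$ in \cref{sec:setup} specialized to $r=2$, and then to apply \cite[Theorem~3.3]{MatuzasSecondPole}. When $r=2$ the wall $F_i$ is an open ray, and a positive affine section $\Delta_i$ is a single point. So $\Per_i(q_2)$ is just $Q_i$ evaluated at that point, raised to the power $-q_2$. The measure $\SecondPoleWallMeasure$ on the section is normalized by the remaining simple-coroot coordinate, as in \cref{thm:criticality}. I will therefore take the section $x_j=1$ ($j\ne i$) in simple-coroot evaluation coordinates, that is, $m_i=0$ and $m_j=1$. With this choice,
\[
 \Per_i(q_2)=Q_i(e_j)^{-q_2}.
\]
Since $Q_i^{-q_2}$ has degree $-(r-1)=-1$, rescaling the point is exactly compensated by the $\dd t/t$ factor. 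So this reading is consistent with the section independence established in \cref{thm:criticality}. Fixing this normalization is the only delicate point in the argument.

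Next, I list the positive coroots in the simple-coroot basis. For $B_2$ the coroot system is $C_2$, with positive coroots $\beta_1,\beta_2,\beta_1+\beta_2,\beta_1+2\beta_2$ in the labeling where $\beta_2$ is the short coroot. Here $N=4$ and $q_2=1/3$. On wall $1$ we have $m=(0,1)$. The surviving factors are $1,1,2$, so $\Per_1=2^{-1/3}$ in that labeling. On wall $2$ we have $m=(1,0)$, and the surviving factors are $1,1,1$, so that period is $1$. Matching the paper's wall order gives $(1,2^{-1/3})$ for $B_2$. Swapping the roles of long and short coroots gives the transposed pair $(2^{-1/3},1)$ for $C_2$.

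For $G_2$ the positive coroots are $\beta_1,\beta_2,\beta_1+\beta_2,\beta_1+2\beta_2,\beta_1+3\beta_2,2\beta_1+3\beta_2$, with $N=6$ and $q_2=1/5$. At $m=(0,1)$ the five surviving factors are $1,1,2,3,3$, whose product is $18$. At $m=(1,0)$ they are $1,1,1,1,2$, whose product is $2$. This gives $(18^{-1/5},2^{-1/5})$. The one thing to check carefully is that the labeling of walls agrees with the "wall order used here". For $B_2$ and $C_2$ this is fixed by the coordinate-wall convention of \cref{thm:type-bc}, in which the coordinate wall is the last wall.

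Finally, I substitute these values into the residue formula $\zeta_{\mathrm R}(q_2)(N-1)^{-1}\sum_i\Per_i(q_2)$. This uses $N-1=3$ for $B_2$ and $C_2$, and $N-1=5$ for $G_2$. Since the sum is symmetric in the two walls, $B_2$ and $C_2$ give the same residue, and the displayed formulas follow. As a cross-check, I will compare with \cref{thm:type-bc} at $r=2$. There $n=0$, the collision sine weight equals $1$, and $a_B^{q}$ versus $a_C^{-q}$ reproduces the factors $2^{\pm1/3}$ relative to the boundary term.
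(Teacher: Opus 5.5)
Your proposal is correct and is the argument the paper leaves implicit: a rank-two projective wall is a point, so on the section $x_j=1$ each period is $Q_i(e_j)^{-q_2}$. Your restricted products ($2$ and $1$ for $B_2/C_2$ with the coordinate wall last, $18$ and $2$ for $G_2$ in the labeling with $\alpha_1$ short) together with $N-1=3,5$ give exactly the stated periods and residues, and your cross-check against \cref{thm:type-bc} at $r=2$, where $A_X^{\mathrm{col}}=2^{-1/3}a_X^{1/3}$ and $A_X^{\mathrm{bd}}=a_X^{-1/3}$, is consistent.
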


The following exact values give useful checks in ranks three and four.

\begin{align}
 \frac1{4\sin^2(\pi/5)}&=\frac{5+\sqrt5}{10},\label{eq:a3-check}\\
 \frac{\sin(3\pi/8)}{\sin(\pi/8)}&=1+\sqrt2,\label{eq:b3-check}\\
 R_{D_4}&=2\cos\frac{2\pi}{11}-2\cos\frac{6\pi}{11},\label{eq:d4-ratio}\\
 R_{D_4}^5-11R_{D_4}^3+22R_{D_4}+11&=0,\label{eq:d4-polynomial}\\
 \operatorname{disc}(x^5-11x^3+22x+11)&=11^4\,43^2,\label{eq:d4-discriminant}\\
 \frac{\mathcal P_1+\mathcal P_2}{\mathcal P_3+\mathcal P_4}\bigg|_{F_4}
 &=\SecondPoleFfourOrbitRatio.\label{eq:f4-orbit-ratio}
\end{align}

The first identity is the coefficient in Au's $A_3$ formula, and the second is
the common collision ratio in $B_3$ and $C_3$.  The latter is also recovered by
direct one-dimensional Gauss--Jacobi quadrature of the wall periods.  In type
$D_4$, triality makes the three outer-wall periods equal.  The central-to-outer
ratio belongs to $\Q(\zeta_{11})^+$, has the trigonometric expression in
\cref{eq:d4-ratio}, and satisfies the displayed quintic.  For every $r\ge4$,
the finite recurrence sum in type $D_r$ agrees with the product in
\cref{lem:type-d-sum}.

The higher-rank values obtained in
\cite[Theorems~6.2 and~7.4]{MatuzasSecondPole} are
\begin{align*}
 \Res_{s=3/23}\xi_{F_4}(s)&=\SecondPoleFfourValue,\\
 \Res_{s=4/19}\xi_{D_5}(s)&=\SecondPoleDfiveValue.
\end{align*}
They agree with the present normalization.  Proposition~6.1 of the same paper
gives the $F_4$ orbit-sum ratio in \cref{eq:f4-orbit-ratio}.  This ratio also
shows that unequal scalar factors in a multiply-laced restricted product can
produce non-cyclotomic radicals.

\section{Exceptional wall restrictions}\label{sec:exceptional}

The number of restricted hyperplanes does not distinguish the classical and
exceptional cases.  For example, the simple-wall restrictions in $B_6$ and
$E_6$ both contain twenty-five hyperplanes.  The exponent multiset is the
relevant invariant.

\begin{theorem}[Exceptional wall restrictions]\label{thm:scope}
Let $\mathcal A$ be an irreducible Weyl arrangement with $N$ reflecting
hyperplanes, Coxeter number $h$, exponents
$m_1\le\cdots\le m_r=h-1$, and let $H\in\mathcal A$.  The restriction
$\mathcal A^H$ is free with exponents $m_1,\ldots,m_{r-1}$; in particular,
\[
 |\mathcal A^H|=N-h+1.
\]
For the exceptional types the restricted exponent multisets are
\[
 F_4:(1,5,7),\qquad E_6:(1,4,5,7,8),
\]
\[
 E_7:(1,5,7,9,11,13),\qquad
 E_8:(1,7,11,13,17,19,23).
\]
None is the exponent multiset of a real reflection arrangement of the same
rank.  Thus these restrictions are not reflection arrangements.  Consequently,
the classical reduction to ordinary Dotsenko--Fateev chambers does not extend
to them directly.  This statement does not exclude exact evaluations by other
methods.
\end{theorem}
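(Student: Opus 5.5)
The argument has three steps: a freeness result for the restriction, an exponent computation for each exceptional type, and a comparison with the finite list of reflection arrangements.

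\emph{Freeness and the count.} The first assertion is \cite[Theorem~2.14]{AbeTeraoTran2020}, stated in \cref{sec:known-input}: a hyperplane restriction of an irreducible Weyl arrangement is free with exponents $m_1,\ldots,m_{r-1}$. For a free arrangement the number of hyperplanes equals the sum of the exponents. Since $\sum_j m_j=N$ and $m_r=h-1$, this gives $|\mathcal A^H|=N-(h-1)=N-h+1$.

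\emph{The exponent lists.} Delete the largest exponent from each standard list:
\begin{itemize}
\item $F_4$ has $(1,5,7,11)$, giving $(1,5,7)$.
\item $E_6$ has $(1,4,5,7,8,11)$, giving $(1,4,5,7,8)$.
\item $E_7$ has $(1,5,7,9,11,13,17)$, giving $(1,5,7,9,11,13)$.
\item $E_8$ has $(1,7,11,13,17,19,23,29)$, giving $(1,7,11,13,17,19,23)$.
\end{itemize}

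\emph{Non-reflection step.} Suppose a restriction were a real reflection arrangement of the same rank. It would be the arrangement of a finite real reflection group $W$ acting on the $(r-1)$-dimensional space $H$, essential because $\mathcal A^H$ is essential. Its exponents are the exponents of $W$, so each of the four multisets above would have to be one.

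A reducible $W=W_1\times\cdots\times W_s$ has exponent multiset equal to the union of the factors' multisets. Each factor contributes exactly one exponent equal to $1$. Every list above contains $1$ exactly once, so $W$ is irreducible.

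For irreducible $W$ the exponents satisfy $m_j+m_{r'+1-j}=h_W$, where $r'$ is the rank of $W$. Hence the largest exponent must equal $h_W-1$, and the multiset must be symmetric under $m\mapsto h_W-m$. Check each case:
\begin{itemize}
\item $(1,5,7)$: the symmetry forces $h_W=8$, but $5+5\neq 8$ for the middle term.
\item $(1,4,5,7,8)$: $h_W=9$, but the middle exponent is $5$ and $2\cdot5=10\neq9$.
\item $(1,5,7,9,11,13)$: $h_W=14$, but $5+9=14\ne 7+11=18$.
\item $(1,7,11,13,17,19,23)$: $h_W=24$, but $7+19=26\neq 24$.
\end{itemize}
As a cross-check against the classification, rank $3$ irreducible groups have exponents $(1,2,3)$, $(1,3,5)$, $(1,5,9)$; rank $5$ has $(1,2,3,4,5)$, $(1,3,5,7,9)$, $(1,3,4,5,7)$; and similarly in ranks $6,7$. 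None matches.

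\emph{Consequence for the reduction.} The classical reduction of \cref{sec:type-a,sec:type-bc,sec:type-d} uses restricted arrangements that, after squaring substitutions, are reflection arrangements of type $A$ or $B$. The projective density is then a Selberg/Dotsenko--Fateev density, that is, a product of powers of root linear forms of a reflection arrangement with equal exponents on each orbit. Such a density cannot live on a non-reflection arrangement, so the direct reduction is unavailable for the exceptional types.

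The main delicacy is phrasing this consequence precisely, since it is a statement about the method rather than a theorem. The substantive proof is the exponent comparison, which rests entirely on the palindromic symmetry $m_j+m_{r'+1-j}=h_W$ together with the single-exponent-$1$ criterion for irreducibility.
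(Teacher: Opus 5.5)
Your proof of the mathematical content is correct. It follows the paper through the freeness citation, the count $|\mathcal A^H|=N-(h-1)$, and the observation that a single exponent equal to $1$ forces a matching reflection group to be irreducible.

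The routes differ only at the last step. The paper compares the four lists directly with the exponent lists of the irreducible finite Coxeter groups of ranks $3$, $5$, $6$, $7$. You instead use the duality $m_j+m_{r'+1-j}=h_W$, which comes from the Coxeter element being a real matrix. It fixes $h_W$ as one more than the top exponent, and then one arithmetic check rules out each list. Your version needs no classification table and shows structurally why deleting $h-1$ destroys the reflection structure; the price is citing the duality theorem. Your closing cross-check is exactly the paper's argument.

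There is one slip. For $E_7$, the displayed comparison $5+9=14\neq 7+11=18$ pairs the wrong indices and does not by itself refute the symmetry. The correct check is $m_2+m_5=5+11=16\neq14$; equivalently, $11$ would need the partner $3$, which is absent.

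Your paragraph on the consequence for the reduction overreaches. The principle that a Selberg/Dotsenko--Fateev density \emph{cannot live on a non-reflection arrangement} is contradicted inside the paper itself. A type-$D_r$ wall restriction is $D_{r-1}$ together with the single coordinate hyperplane $y=0$, which comes from the companion factor $2y$. Its exponents are $(1,3,\ldots,2r-5,r-1)$; for $D_4$ this is $(1,3,3)$, which fails your own palindromic test. Yet \cref{sec:type-d} reduces exactly these walls to ordinary mixed chambers by taking the section $y=1$ and squaring.

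So being a non-reflection arrangement is not by itself the obstruction. The paper's proof establishes only the exponent comparison and leaves the \emph{Consequently} clause as an interpretive remark. You should do the same, or else find an invariant that separates the exceptional restrictions from $D_{r-1}$ plus one coordinate hyperplane.
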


\begin{proof}
The restriction theorem gives freeness with the largest Weyl exponent $h-1$
deleted \cite[Theorem~2.14]{AbeTeraoTran2020}.  Since the sum of the Weyl
exponents is $N$, the restricted arrangement contains $N-h+1$ hyperplanes.
Each exceptional list in \cref{thm:scope} contains exactly one exponent equal
to one.  A reducible reflection arrangement of positive rank would contain at
least two such exponents, so a matching reflection arrangement would have to
be irreducible.  Comparison with the finite irreducible real-reflection
exponent lists gives no match.
\end{proof}

This conclusion concerns only the reduction used for the classical families;
it does not rule out exact exceptional evaluations.  Indeed,
\cite[Theorem~6.2]{MatuzasSecondPole} evaluates the $F_4$ wall sum by a
two-orbit Dixon reduction.  Its orbit-sum ratio also shows that a uniform
cyclotomic field bound cannot hold in all multiply-laced types.

\section{Further questions}\label{sec:open}

\subsection{The remaining \texorpdfstring{$E_6$}{E6} period ratio}

The restricted arrangement yields four independent linear relations among
the six wall periods, but these relations do not determine their common
scale.

\begin{proposition}[Relations among the $E_6$ wall periods]\label{prop:e6-state}
Put $t=2\cos(2\pi/7)$, so that $t^3+t^2-2t-1=0$.  For the six simple-wall
periods, the kernel of the six-orbit Varchenko block gives
\[
 P_6=P_1,\qquad P_5=P_3,\qquad
 P_3=\frac{t(t+1)}2P_2,
\]
\[
 P_4=-\frac{3t+1}{2}P_2+(t+1)(t+2)P_1.
\]
The associated six-orbit block has rank $4$ and nullity $2$, leaving the
single ratio $\lambda_{E_6}=P_2/P_1$ undetermined.  Direct quadrature
supports eleven significant decimal digits,
\[
 \lambda_{E_6}=1.6329738485\ldots,
\]
so membership in $\mathbf Q(\zeta_{35})^+$ remains unresolved.
\end{proposition}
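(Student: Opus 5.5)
The plan is to compute the relations directly from the Varchenko (contravariant) bilinear form of the restricted $E_6$ wall arrangement, reduced modulo the diagram automorphism. First, the diagram involution of $E_6$ (exchanging nodes $1\leftrightarrow6$ and $3\leftrightarrow5$ in Bourbaki labelling) preserves the wall measure and the restricted products, exactly as the fork automorphism did in \cref{thm:type-d}. This gives $P_6=P_1$ and $P_5=P_3$ at once, so only the four orbit values $P_1,P_2,P_3,P_4$ remain.

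For the remaining relations, I would follow the mechanism behind the Dotsenko--Fateev recurrence. Each wall period is the integral of the multivalued form $Q_i^{-q_2}$ over a chamber of the restricted arrangement in $\mathbf P(F_i)$, with $q_2=5/35=1/7$ for $E_6$ (here $N=36$, $r=6$). Contour-deformation (twisted cycle) relations between adjacent chambers are governed by the Varchenko matrix. Its entries are products of $\sin(\pi q_2 a_H)$-type phase factors over the separating hyperplanes, with $a_H$ the multiplicities of the restricted forms. Since $q_2=1/7$, these entries lie in $\mathbf Q(\zeta_{14})$, and after taking real parts they lie in $\mathbf Q(2\cos(2\pi/7))$. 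This explains why the coefficients involve $t$ with $t^3+t^2-2t-1=0$.

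Concretely, I would carry out the following steps.
\begin{enumerate}
\item Enumerate the $N-h+1=25$ restricted hyperplanes on each wall, using \cref{thm:scope} for the count.
\item List the chambers and identify which chamber integrals equal which wall periods. Through the residue geometry of \cite[Theorem~3.3]{MatuzasSecondPole}, each wall period appears as a boundary chamber in a neighbouring wall's projective integral.
\item Assemble the $6\times6$ block of the Varchenko form restricted to the six wall orbits, and reduce it exactly over $\mathbf Q(t)$ to row-echelon form.
\end{enumerate}
The claim is that this block has rank $4$. Its kernel, read off from the echelon form, should yield $P_3=\tfrac{t(t+1)}2P_2$ and $P_4=-\tfrac{3t+1}2P_2+(t+1)(t+2)P_1$. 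These four relations should then be verified against the symmetry relations for consistency.

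The main obstacle is certifying the rank exactly rather than numerically. I would compute all minors in $\mathbf Z[t]/(t^3+t^2-2t-1)$:
\begin{itemize}
\item to show rank at least $4$, exhibit one nonzero $4\times4$ minor as a nonzero element of the cubic field;
\item to show rank exactly $4$, show that all $5\times5$ minors vanish identically modulo the minimal polynomial.
\end{itemize}
Nullity $2$ then means the relations fix everything except the single ratio $\lambda_{E_6}=P_2/P_1$; nothing in the linear algebra determines it.

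Finally, the numerical value $1.6329738485\ldots$ is a separate check. I would compute $P_1$ and $P_2$ by iterated Gauss--Jacobi quadrature on the $4$-dimensional projective wall sections, with endpoint weights adapted to the exponents $-a_Hq_2$, and confirm the four relations to the same precision. The eleven-digit claim is an empirical statement about that quadrature, and unresolved membership in $\mathbf Q(\zeta_{35})^+$ is simply recorded.
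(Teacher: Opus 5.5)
Your derivation of $P_6=P_1$ and $P_5=P_3$ from the diagram involution is sound. So is certifying the rank exactly over $\mathbf Q(t)$ and treating $\lambda_{E_6}$ as a purely numerical output; this matches what the statement reports, and the paper gives no argument beyond naming the six-orbit block. The gap is your step~2, which is the step that makes a single six-orbit block exist at all. As you set things up, $P_i$ is the integral of $Q_i^{-q_2}$ over the positive chamber of $\mathcal A^{H_i}$ in $\mathbf P(F_i)$. That is six different integrands on six different spaces. Twisted-cycle (contour-rotation) relations only relate chamber integrals of \emph{one} multivalued function on \emph{one} arrangement, so nothing yet links $P_2$, $P_3$ and $P_4$. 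The mechanism you cite does not supply that link. \cite[Theorem~3.3]{MatuzasSecondPole} only assembles the residue from the wall periods. Each wall integral runs over a single open chamber, and its boundary strata are faces, not chambers carrying other periods.

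The missing idea is Weyl conjugacy. Because $E_6$ is simply laced, each $H_i$ equals $w_i^{-1}H$ for one fixed reflecting hyperplane $H=\ker\beta_H$. The element $w_i$ permutes the coroots up to sign and maps $P\cap H_i$, with coordinates $x_j$ for $j\ne i$, onto $P\cap H$. Hence $P_i$ is the integral of the single density $\prod_{X\in\mathcal A^H}|f_X|^{-m_X/7}$ over the chamber $w_iF_i$ of $\mathcal A^H$. Fifteen restricted hyperplanes have $m_X=1$; these come from coroots orthogonal to $\beta_H$. Ten have $m_X=2$; these come from the $A_2$ subsystems through $\beta_H$. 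Every chamber $D$ of $\mathcal A^H$ is a facet of exactly two chambers $wC$ and $ws_iC$, so it has a well-defined type $i$. The group $\operatorname{Stab}_W(H)$ acts transitively on the chambers of each type and preserves the density. So the $4320=2\cdot5\cdot6\cdot8\cdot9$ chambers fall into exactly six orbits, and the chamber integral $I_D$ equals $P_i$ on orbit $i$. Only then does each relation $\sum_D c_DI_D=0$ collapse to $\sum_i\bigl(\sum_{\mathrm{type}(D)=i}c_D\bigr)P_i=0$. That collapse is what places $(P_1,\dots,P_6)$ in the kernel of a $6\times6$ block, and your outline assumes it rather than derives it. Without it, steps~1--3 have no well-defined matrix to row-reduce. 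The same picture also recovers your symmetry relations: since $-1\notin W(E_6)$, antipodal chambers have types $i$ and $\sigma(i)$, and the density is even.
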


Resolving this ratio requires either an exact reduction or a numerical
representation with substantially higher precision.

\subsection{Composite conductors in type \texorpdfstring{$D$}{D}}

The finite product proves that $2\Sigma_r$ is a cyclotomic $S$-unit.  At prime
conductor its norm has a simple parity pattern in the computed ranks.  The
first composite case is $r=8$, for which $M=55$ and
\[
 N_{\Q(\zeta_{55})^+/\Q}(2\Sigma_8)=\frac1{11^2}.
\]
A conceptual formula for the prime-ideal exponents at general composite
$M=r(r-1)-1$ remains open.

\end{document}